\newtheorem{thm}{Theorem}[section]
\newtheorem{lem}[thm]{Lemma}
\newtheorem{prop}[thm]{Proposition}
\theoremstyle{definition}
\newtheorem{ex}[thm]{Example}
\newtheorem{defi}[thm]{Definition}
\def\sas{{S$\aa$S}}
\def\fdd{\stackrel{fdd}{\Longrightarrow}}
\def\ed{\stackrel{d}{=}}
\def\bi{\begin{itemize}}
\def\ei{\end{itemize}}
\def\aa{\alpha}
\def\eps{\epsilon}
\def\th{\theta}
\def\om{\omega}
\def\Om{\Omega}
\def\be{\begin{equation}}
\def\ee{\end{equation}}
\def\bea{\begin{eqnarray}}
\def\eea{\end{eqnarray}}
\def\nn{\nonumber}
\def\ff{\infty}
\def\({\left(}
\def\){\right)}
\def\[{\left[}
\def\]{\right]}
\def\lk{\left\{}
\def\rk{\right\}}
\def\lb{\left|}
\def\rb{\right|}
\def\dist{\text{dist}}
\def\t{\tau}
\def\tH{\tau^{H'}}
\def\H{\tilde H}
\def\g{g}
\def\R{\mathbb{R}}
\def\Z{\mathbb{Z}}
\def \P{\mathbf{P}}
\def \E{\mathbf{E}}
\def \BB{{\cal B}}
\def \FF{{\cal F}}
\def \GG{{\cal G}}
\begin{document}

\title{Random-time isotropic fractional stable fields}
\author{Paul Jung\footnote{Research was started at Sogang University and supported in part by Sogang University research grant 201010073}
\\[2ex]
{\normalsize Department of Mathematics, University of Alabama Birmingham}\\[1ex]
   }
\date{December 21, 2011} \maketitle
\abstract{Generalizing both Substable FSMs and Indicator FSMs, we introduce $\aa$-stabilized subordination, a procedure which produces new  FSMs ($H$-sssi \sas\ processes) from old ones. We extend these
processes to isotropic stable fields which have stationary increments in the
strong sense, i.e., processes which are invariant under Euclidean rigid
motions of the multi-dimensional time parameter. We also prove a Stable Central Limit Theorem which provides an intuitive picture of 
$\aa$-stabilized subordination.  Finally we show that $\aa$-stabilized subordination of Linear FSMs produces null-conservative FSMs,
a class of FSMs introduced in Samorodnitsky (2005).
}
\vspace{5mm}

\textit{Keywords:} fractional stable motion; self-similar process; stable field
\\

\tableofcontents
\newpage
\section{Introduction}
It is well-known that, up to constant multiples, the one-parameter family of standard fractional Brownian motions
$$\{(B_H(t))_{t\ge 0}\}_{H\in(0,1]}$$ are the only $H$-self-similar, stationary increment ($H$-sssi) Gaussian processes.
The term ``standard'' means that the variance at time $t=1$ is equal
to one.  The parameter $H$ is called the Hurst parameter, and it is
also referred to as the self-similarity exponent since \be\label{ss}
(B_H(ct))_{t\ge 0}\ed(c^HB_H(t))_{t\in \R}. \ee

Isotropic fractional Brownian fields are Gaussian fields $\(B_H(t)\)_{t\in\R^n}$ such that $B_H(0)=0$ and $$\E(B_H(t)-B_H(s))^2=\|t-s\|^{2H}.$$
We have used the term {\it isotropic} to distinguish the fractional Brownian fields of \cite{lindstrom1993fractional}, which we discuss
in the present work, from the
fractional Brownian fields of \cite{dobric2006fractional} which are of a different nature.

The notion of isotropic self-similarity for random fields is the same as that for processes, i.e., the condition given in \eqref{ss} with
$t\in\R$ replaced by $t\in\R^N$.
Note that there is also a notion of anisotropic self-similar fields which allows for different scalings in different directions (cf. \cite{xiao2011properties}),
however, we will consider only isotropic fields.

For isotropic random fields, the natural notion of stationary increments is what is known as
 {\it stationary increments in the strong sense} (sis). Let
$\GG(\R^N)$ be the set of Euclidian rigid motions in $\R^N$. We say
$(X_t)_{t\in\R^N}$ is {\it sis} if for all $\g\in\GG(\R^N)$,
\be\label{def:sis} \(X_{\g(t)}-X_{\g(0)}\)_{t\in\R^N} \ed
\(X_t-X_0\)_{t\in\R^N} \ee where $\ed$ denotes equality of the
finite-dimensional distributions. In other words, the
finite-dimensional distributions of $(X_t)_{t\in\R^N}$ are invariant
under Euclidean rigid motions. It is not hard to see using the
covariance characterization of Gaussian fields that, up to constant
multiples, fractional Brownian fields are the only $H$-sssis
Gaussian fields.

For $0<\aa<2$, consider the (jointly) measurable symmetric $\aa$-stable (\sas) generalization of isotropic fractional Brownian fields.
Then unlike the Gaussian case, for each  $0<\aa<2$ and each $H\in(0,\max\{1,1/\aa\})$\footnote{FSFs with $H=\max\{1,1/\aa\}$ are also possible, but other than $\aa=1$ they are unique
\cite{samorodnitsky19901, CS}.},
there are myriads of {\bf $H$-sssis \sas\ fields} which are called $(\aa,H)$-processes in \cite{takenaka1991integral}.
Using more updated terminology, we shall call such processes {\bf (isotropic) fractional stable fields} (FSFs or $(\aa,H)$-FSFs
when parameters $(\aa,H)$ need to be specified),
see for example \cite{kono1991self,samorodnitsky1994stable,kokoszka1994new,pipiras2002structure,pipiras2004dilated,CS,xiao2011properties}
and the references therein.
When $t\in\R$, we call FSFs, simply, fractional stable motions (FSMs). Note that while
Gaussian fields are easily constructed and characterized by their covariances,
there is no direct generalization\footnote{For stable processes, there are notions similar to covariances called covariations and codifferences, see 
\cite[Chapter 2]{samorodnitsky1994stable}, and there is also a notion of spectral measure \cite{kuelbs1973representation}, but none
of these is perfectly analogous to the beautiful characterization of Gaussian processes in terms of positive definition functions.} of this convenient characterization to \sas\ fields.  Thus, classes of fractional stable fields must
be constructed one by one.

Partly motivated by a financial model discussed in \cite{bouchaud2004fluctuations}, we introduce {\it $\aa$-stabilized subordination}
which we use to construct several FSFs
by combining mechanisms which create both positive and negative dependence.
The procedure generalizes an argument of \cite{jung2010indicator} and produces what we call {\it $\aa$-stabilized random-time kernels}.
Given an integral kernel representation for an FSF, we introduce a random element into the kernel by (i) non-monotonic subordination and,
if needed, (ii) expansion of the stable random measure to include the new source of randomness (we refer to the second part as $\aa$-stabilization; see Theorem \ref{thm:normalization} below). 
The random-time kernel provides an integral representation for an FSF with a different Hurst parameter than that of
the original FSF.
As in typical time-subordinations, our $\aa$-stabilized subordinations replace the time index of an FSF,
$X_s$, with another process, $\t_t$. However, unlike typical
time-subordinations, we do not require $\t_t$ to be an increasing
Levy process, but rather to be an $H$-sssis vector-valued field (thus we will say the process is {\it non-monotonically
subordinated}); in fact,
since $X_s$ is a field, the range-space of $\t_t$ is not even
required to be linearly ordered.

FSFs obtained using $\aa$-stabilized random-time  kernels comprise a broad range of FSFs; in fact some have been previously seen in the literature.
We will see in Example \ref{lt example} that  random-time kernels associated to $\aa$-stable
Levy motions give us indicator fractional stable motions \cite{jung2010indicator}.
Another case where random-time kernels give an alternative view on previously known FSFs is given
in Example \ref{slope example} where it is shown that substable FSFs
(also called subordinated FSFs, cf. \cite[Sec. 7.9]{samorodnitsky1994stable}) are given by
$\aa$-stabilized subordination of random-slope FSMs.

The rest of the paper is organized as follows.  In Section \ref{sec:rtt} we define $\aa$-stabilized random-time kernels,
which produce random-time FSMs and FSFs, and discuss their properties. 
In Section \ref{sec:examples} we give
some examples of random-time FSFs.
In Section \ref{sec:erg}, we use decompositions of stationary \sas\ processes, introduced in
\cite{rosinski1995structure, samorodnitsky2005null}, to analyze
random-time Linear FSMs (L-FSMs).  In particular, we show that random-time L-FSMs are in the class of
{\it null-conservative} FSMs. In the final section we discuss some open problems.

\section{$\aa$-stabilized random-time kernels}\label{sec:rtt}
\subsection{Definition}
Throughout the sequel, unless otherwise stated, we fix $0<\aa<2$. Integral representations of
measurable  $\aa$-stable processes, of the type
\begin{equation} \label{e:stab.process}
X_s = \int_{E} f_s(x) \; M_\aa(dx), \; s\in S,
\end{equation}
where $M_\aa$ is a \sas\ random measure on $(E,\BB)$ with
$\sigma$-finite control measure $m(dx)$, are well-known (see Chapters 11 and
13 of \cite{samorodnitsky1994stable}). The family $\{f_s\}_{s\in S}$
is a subset of $L^\aa(E,m)$ and is called a {\it (spectral)
representation} of $\(X_s\)_{s\in S}$. 

Henceforth we will typically let $t\in\R^N$ and let $s\in\R^d$.
Vector-valued processes $(\t_1(t),\ldots,\t_d(t))$ indexed by $t\in\R^N$ and taking values in $\R^d$ are the
so-called $(N,d)$-fields \cite[Sec. 6]{geman1980occupation}.
If an
$(N,d)$-field  is also $H$-sssis, then it is an $\R^d$-valued isotropic random vector field and we
will call it an $(N,d,H)$-field. The notions of self-similarity and stationary increments in the strong sense are the same for 
 random fields and random vector fields. Namely, the finite-dimensional joint distributions are invariant under
isotropic scaling and their increments are invariant under Euclidean rigid motions.

The following definition is an extension of the
procedure used to define {\it
indicator fractional stable motions} \cite{
jung2010indicator}.  As can be seen in Theorem \ref{thm:normalization}, it can be thought of as an $\aa$-stabilization of {\it iterated processes} or {\it processes at
random times}  (cf. \cite{burdzy1992some, nane2006laws, JM}).

\begin{defi}\label{rtdef}
Let $\{f_s\}_{s\in \R^d}\subset L^\aa(E,m)$ be a representation of an $(\aa,H)$-FSF, $X_s$, supported on $(\Om,\FF,\P)$ 
and let $\(\t_t\)_{t\in \R^N}$ be an $(N,d,H')$-field, with \be\E\|\t_t\|^{H\aa}<\ff,\ee supported on a different probability space
$(\Om',\FF',\P')$. The {\bf $\aa$-stabilized subordination} of $X_s$ with respect to $\t_t$ is given by
\be
X^\t_t:=\int_{E\times\Om'} f_{\t_t(\omega')}(x) \,M_\aa(dx \times d\omega'),
\ee
and it is represented by the {\bf $\aa$-stabilized random-time kernel}
\be \label{lpcondition}
\{f_{\t_t(\om')}\}_{t\in \R^N}\subset L^\aa(\Om'\times
E,\P'\times m).
\ee
\end{defi}

\noindent {Remarks:}
\begin{enumerate}
\item  FSFs which are produced using $\aa$-stabilized subordination will be called {\it random-time FSFs}. Although representations of $\aa$-stable processes
are not unique, let us see that for a given $X_s$ and $\t_t$, the process $X^\t_t$ is unique (in terms of finite-dimensional distributions).

Suppose $\{f_s\}$ on $(E,m)$ and $\{g_s\}$ on $(D,\pi)$ are two different representations of $X_s$. Fix a vector of times $(s_1,\ldots,s_n)$.  By Eq. 3.2.2 in \cite{samorodnitsky1994stable}, the characteristic function of the $n$-dimensional distribution $(X_{s_1},\ldots,X_{s_n})$ is given by
\bea
\phi(\theta_1,\ldots,\theta_n)&=& \exp\lk-\int_E|\sum_{j=1}^n \th_jf_{s_j}|^\aa \, m(dx)\rk\nn\\
&=& \exp\lk-\int_B|\sum_{j=1}^n \th_jg_{s_j}|^\aa \, \pi(dx)\rk\nn.
\eea
Therefore the characteristic function of $(X_{s_1}^\t,\ldots,X_{s_n}^\t)$ is given by
\be
\exp\lk-\int_E\E'|\sum_{j=1}^n \th_jf_{\t_{s_j}}|^\aa \, m(dx)\rk\\
= \exp\lk-\int_B\E'|\sum_{j=1}^n \th_jg_{\t_{t_j}}|^\aa \, \pi(dx)\rk.
\ee
\item
Condition \eqref{lpcondition} is needed for $\{f_{\t_t(\om)}\}_{t\in \R^N}$ to be a well-defined representation
(see Section 3.2 of
\cite{samorodnitsky1994stable} for details).
To see that \eqref{lpcondition} indeed holds, using self-similarity (see Lemma \ref{lemma: ss} below) we see that
\bea\label{welldefined}
\nn\int_{\Om'}\int_{E} |f_{\t_t(\omega')}(x)|^\aa \, dx \,\P'(d\omega') &=& \E'\int_{E} \|\t_t(\omega')\|^{H\aa}|f_{1}(x)|^\aa \, dx\\
&=& C\E'\|\t_t\|^{H\aa}<\infty\eea 
where $C=\int_{\R} |f_{1}(x)|^\aa \, dx$.
\item The above definition and the following results can be extended to vector-valued FSFs with representations involving vectors
of $L^\aa$ functions. However, the notation becomes cumbersome, so we have restricted ourselves to real-valued FSFs
and vector-valued subordinators.
\end{enumerate}

\subsection{Motivations}

Before delving into equations and proofs, let us discuss some motivations behind Definition \ref{def:sis}.

From a purely mathematical standpoint, we have the following two motivations for $\aa$-stabilized random-time kernels:

\bi

\item Definition \ref{rtdef} provides quite general conditions under which
$H$-sssi non-monotonic subordination of kernels produces FSFs. In particular,
Theorem \ref{thm:sssi} expands upon the idea behind Indicator FSMs, an application of Definition \ref{rtdef} to a very specific kernel (see Example \ref{lt example}),
to show that the two properties of self-similarity and stationary increments are ``preserved'' under time-subordination.

\item We say that an FSM is dissipative, null-conservative, or positive conservative if its increment process is dissipative, null-conservative, or positive conservative (see Section \ref{sec:erg}).
The class of dissipative FSMs are well understood and their increments have canonical representations as mixed moving averages
\cite{surgailis1993stable, pipiras2002decomposition, pipiras2002structure}; additionally, \cite{rosinski2000decomposition}
extends this class to FSFs. 
The class of positive-conservative FSMs also have canonical representations \cite[Remark 2.6]{samorodnitsky2005null}
for their increment processes,
and a further decomposition of the increments into their harmonizable, cyclic, and non-periodic components is known \cite{pipiras2004stable}.
There are no canonical representations  for the increments of null-conservative FSMs, and thus, more examples of null-conservative FSMs give us a better handle
on them. In Section \ref{sec:erg} we exhibit, for each $(\aa,H)$ pair, a family of null-conservative FSMs.
\ei

On the other hand, more than being a generalization of Indicator FSMs just for the sake of generalizing, 
 an application of $\aa$-stabilized random-time kernels can be used 
to model certain effects in the stock market. 

In \cite{bouchaud2004fluctuations}, it was argued that approximate diffusive behavior in financial markets,
i.e. linear growth in the quadratic variation of a given market process,
was a result of a mixture of both positive and negative correlations between increments of the process. Although the processes
discussed in the current work have infinite variance and thus undefined correlations, we were nevertheless 
 motivated to find self-similar, stationary increment processes exhibiting both positive and negative dependence in a natural way such that
 the positive and negative dependencies could be teased apart. As such, in the next subsection we will see that if one starts with a kernel representation of an FSM
whose increments are positively dependent, then $\aa$-stabilized subordination to a self-similar process
introduces countering negative dependencies.

A discrete analysis of some of the continuous-time processes discussed in this work, and their relation to results of \cite{bouchaud2004fluctuations} will
be fleshed out in a subsequent work. However, for readers seeking applied motivations, we give a brief description of a possible discrete model. 

Let us consider two different
types of players in the stock market:

\begin{itemize}
\item[T:] A trader (liquidity taker) that is buying or selling stock in a company based on information
\item[M:] A market maker (liquidity provider) who trades for {\it edge} and not on information
\end{itemize}
Market makers are traders who are contracted by exchanges to
continuously have both bids and offers on a given stock symbol. 
If a market maker trades {\it only} for ``edge'' (this is atypical,
but can be considered an extreme case), then the trades will tend to
be cancellative or alternating in nature. For example, if market
maker Mary believes the ``true price'' of stock XYZ to be \$100,
then she may show a bid of \$99 and an ask of \$101 (an edge of \$1
on either side).  If trader Tom comes along and buys stock XYZ for
\$101, then Mary might increase both the bid and ask, perhaps to a
bid of \$100 and an ask of \$102.  If market conditions do not change, this should increase her chances of
buying rather than selling stock on her next trade.

Here are three related reasons for such an increase.  First of all,
Mary may believe that Tom has more information about the company
and/or market then her, and thus Mary's ``true price'' needs to be
adjusted. Secondly, if Mary hedges by buying the stock for \$100,
then she will have made an edge-profit of \$1, essentially risk-free
since she will no longer have exposure to the stock. Finally, Mary should
increase her ask price since she wants to limit any {\it further} exposure
to the stock.

The above paragraph describes why Mary's trades might tend to
alternate between buys and sells. This behavior taken by itself
would contradict the ``stylized fact'' that stock market returns
have no (or very little) autocorrelations
\cite{campbell1997econometrics,cont2001empirical}. In
\cite{bouchaud2004fluctuations}, it was argued that sequences of
trades (and thus stock price changes) strike a balance between being
dominated by liquidity takers (T) and by market makers (M). The T's
generally cause positive correlations while M's cause negative
correlations; as mentioned above, the combination results in diffusive-like behavior.
Here, a trade being dominated by T should be interpreted as
the direction of the stock price change being influenced by market
information, and a trade being dominated by M should be interpreted
as the direction of the stock price change being influenced by
reasons described in the preceding paragraph.

A discrete model can now be described as follows. Let stock price changes be
identified with trades, and suppose a sequence of
 trades with alternating signs (for simplicity assume finite variance) is placed at a node on a
graph. Other sequences of alternating trades are placed at other
nodes, and nearby nodes of distance $d$ have initial values which
are, on average, positively correlated. The average correlation goes
to zero as $d\to\infty$. Now consider a marker which moves from node
to node indicating the position of the last trade made. 

Think now of the marker as a random walk and think of the initial
trades at different nodes as random sceneries on those nodes.  The sign of the trades at a given node typically alternates between
successive visits of the random walk. If the trades (sceneries) at different
nodes are independent and the graph formed by the nodes is $\Z$, then normalized sums of such processes is precisely
the model considered in \cite{JM}. In the infinite variance case, such a model scales to an Indicator FSM.


\subsection{Properties}

Throughout the sequel we will assume the following \textit{Usual Conditions}:
\bea\label{uc}
\begin{cases}(X_s)_{s\in\R^d}&\text{ is a measurable }(\aa,H)\text{-FSF supported on }(\Om,\FF,\P)\\
\{f_s\}_{s\in\R^d}&\text{ is a representation of }X_s\\
(\t_t)_{t\in\R^N} &\text{ is a measurable }(N,d,H')\text{-field supported on }(\Om',\FF',\P'),\\
 &\text{different from }(\Om,\FF,\P),\text{ and it satisfies }\E'\|\t_t\|^{H\aa}<\ff\\

\end{cases} 
\eea

The following Stable Limit Theorem shows why we say the random-time kernels are ``$\aa$-stabilized''.

\begin{thm}\label{thm:normalization}
Assume the usual conditions in \eqref{uc}. If $\{X_s^{(l)}\}$ and $\{\t_t^{(l)}\}$ are i.i.d. copies of $X_s$ and $\t_t$, respectively, then
\be
 n^{-\frac{1}{\aa}}\sum_{l=1}^n X_{\t_t^{(l)}}^{(l)} \fdd X^\t_t.
\ee
\end{thm}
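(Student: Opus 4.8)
The plan is to establish convergence of the finite-dimensional distributions, which by the Cram\'er--Wold device reduces to showing that, for fixed times $t_1,\ldots,t_k\in\R^N$ and coefficients $\th_1,\ldots,\th_k$, the linear combination $\sum_{j=1}^k\th_j\( n^{-1/\aa}\sum_{l=1}^n X^{(l)}_{\t^{(l)}_{t_j}}\)$ converges in distribution to $\sum_{j=1}^k\th_j X^\t_{t_j}$; equivalently, that the corresponding characteristic functions (evaluated at argument $1$) converge. Setting $U_l:=\sum_{j=1}^k \th_j X^{(l)}_{\t^{(l)}_{t_j}}$, the variables $U_1,\ldots,U_n$ are i.i.d.\ copies of $U:=\sum_{j=1}^k \th_j X_{\t_{t_j}}$, so by independence over $l$ the target characteristic function factors as $\(\E\, e^{i n^{-1/\aa}U}\)^n$.

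First I would evaluate a single factor by conditioning on the subordinator field (equivalently, on $\Om'$). Given $\t$, the variable $U$ equals the stable integral $\int_E \(\sum_{j=1}^k \th_j f_{\t_{t_j}}(x)\)M_\aa(dx)$, a one-dimensional \sas\ random variable; using the representation of $X_s$ and Eq.~3.2.2 of \cite{samorodnitsky1994stable}, its conditional characteristic function at $n^{-1/\aa}$ is $\exp\lk -n^{-1}W\rk$, where \be W:=\int_E\lb\sum_{j=1}^k\th_j f_{\t_{t_j}}(x)\rb^\aa m(dx)\ee is a nonnegative random variable on $\Om'$ (here the factor $|n^{-1/\aa}|^\aa=n^{-1}$ is the source of the stable normalization). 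Taking expectation over $\Om'$ gives $\E\, e^{i n^{-1/\aa}U}=\E'\exp\lk-n^{-1}W\rk$, so the target characteristic function is $\(\E'\exp\lk-n^{-1}W\rk\)^n$.

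Next I would pass to the limit $n\to\ff$. Writing $a_n:=n\(1-\E'\exp\lk-W/n\rk\)=\E'\[\,n\(1-e^{-W/n}\)\,\]$, the elementary inequality $1-e^{-x}\le x$ gives $n(1-e^{-W/n})\le W$, while pointwise $n(1-e^{-W/n})\to W$. Since $\E'W<\ff$ (finiteness for a single time is \eqref{welldefined}, and the general case follows from it together with the bound $\lb\sum_j\th_j f_{\t_{t_j}}\rb^\aa\le c_k\sum_j|\th_j|^\aa|f_{\t_{t_j}}|^\aa$), dominated convergence yields $a_n\to\E'W$, and hence $\(1-a_n/n\)^n\to\exp\lk-\E'W\rk$. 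Finally I would identify this limit: by the definition of $X^\t_t$ and Eq.~3.2.2 of \cite{samorodnitsky1994stable}, the characteristic function of $\sum_{j=1}^k\th_j X^\t_{t_j}$ at argument $1$ is $\exp\lk-\int_{E\times\Om'}\lb\sum_{j=1}^k\th_j f_{\t_{t_j}(\om')}(x)\rb^\aa m(dx)\,\P'(d\om')\rk=\exp\lk-\E'W\rk$, which matches the limit and completes the proof.

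The step I expect to be the main obstacle is the interchange of limit and expectation giving $a_n\to\E'W$, which is exactly where the moment hypothesis $\E'\|\t_t\|^{H\aa}<\ff$ is indispensable: should $\E'W=\ff$, then $a_n\to\ff$ and $\(1-a_n/n\)^n\to 0$, so the normalized sums would fail to converge to a nondegenerate law. Thus the finiteness of $\E'W$, secured through \eqref{welldefined} by the moment condition, is what simultaneously legitimizes the dominated-convergence argument and pins down the correct \sas\ limit; the remaining manipulations are routine bookkeeping with the \sas\ characteristic-function formula.
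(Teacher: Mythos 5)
Your proposal is correct and follows essentially the same route as the paper: factor the characteristic function over the i.i.d.\ copies, condition on $\Om'$ to reduce each factor to $\E'\exp(-n^{-1}W)$ with $W=\int_E|\sum_j\th_jf_{\t_{t_j}}|^\aa\,m(dx)$, and apply dominated convergence (using $1-e^{-x}\le x$ and $\E'W<\ff$ from \eqref{welldefined}) to obtain the limit $\exp(-\E'W)$. Your added justification that $\E'W<\ff$ for a general linear combination, via $\lb\sum_j\th_jf_{\t_{t_j}}\rb^\aa\le c_k\sum_j|\th_j|^\aa|f_{\t_{t_j}}|^\aa$, is a small point the paper leaves implicit and is a welcome clarification.
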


\begin{proof}
Fix $(\theta_1,\cdots,\theta_k)\in \R^k, (t_1,\cdots,t_k)\in \R^{Nk}$. Let $\{f_s\}_{s\in\R^d}$ be a representation of 
$X_s$ with control measure $m$ on $E$, and
let $$\Gamma_n(t):=n^{-\frac{1}{\aa}}\sum_{l=1}^n X_{\t_t^{(l)}}^{(l)}.$$
We compute the characteristic functions 
\begin{eqnarray}
\E\left[\exp\left( i\sum_{j=1}^k \theta_j\Gamma_n(t_j)\right)\right]&=&\E\left[\exp\left( in^{-\frac{1}{\aa}}\sum_{j=1}^k \theta_j X_{\t_{t_j}} \right)\right]^n \nonumber\\
&=&\E'\left[\exp\(-n^{-1}\int_E |\sum_{j=1}^n\th_j f_{\t_{t_j}}|^\aa m(dx)\)\right]^n.
\end{eqnarray}
It is enough now to show that
\begin{equation}\label{eq2.2}
\E'\left[\exp\(-n^{-1}Z\)\right]=1-n^{-1}\E'Z+o(n^{-1})
\end{equation}
where $Z(\omega'):=\int_E |\sum_{j=1}^n\th_j f_{\t_{t_j}}|^\aa m(dx)$.
Note that $\E'Z<\ff$ by \eqref{welldefined}. 

Eq. \eqref{eq2.2} is proved by
$$n\left(1-\E'\left[\exp(-n^{-1} Z)\right]\right)\mathop{\longrightarrow}_{n\rightarrow\infty} \E'Z,$$
which follows from the Dominated Convergence Theorem: 
$n(1-\exp(-n^{-1}Z))$ converges almost surely to $Z$ and is almost surely bounded from above by $Z$ which is integrable with respect to $\P'$.
\end{proof}

Let us now show that random-time FSFs are legitimate isotropic fractional stable fields. We first need the following lemma which is a slight generalization of Proposition
7.3.6 in \cite{samorodnitsky1994stable} to isotropic random fields:

\begin{lem}\label{lemma: ss}
Assume the usual conditions in \eqref{uc}. The family
$\{f_s\}_{s\in\R^d}$ is a representation of $X_s$ if and only if for
any $n\ge 1$ and $\theta_j\in\R$, $s_j\in\R^d$, $1\le j\le n$, the
following integral does not depend on $c>0$ nor on the Euclidean rigid motion $\g\in\GG$:

 \be c^{-H\aa}\int_E \lb\sum_{j=1}^{n-1} \theta_j(f_{cg(s_{j+1})}-f_{cg(s_{j})})\rb^\aa m(dx). \ee
\end{lem}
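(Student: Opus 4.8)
The plan is to prove the equivalence by translating the condition that $\{f_s\}$ represents an $(\aa,H)$-FSF into the language of characteristic functions, and then recognizing the stated integral as exactly the quantity that must be invariant. The key tool is Eq.~3.2.2 of \cite{samorodnitsky1994stable}, already invoked in Remark~1: for a \sas\ process with representation $\{g_s\}\subset L^\aa(E,m)$, the joint characteristic function of $(Y_{u_1},\ldots,Y_{u_n})$ is $\exp\{-\int_E |\sum_j \th_j g_{u_j}|^\aa\,m(dx)\}$. Thus two representations give the same finite-dimensional distributions if and only if the integrals $\int_E |\sum_j \th_j g_{u_j}|^\aa\,m(dx)$ agree for all choices of $\th_j$ and $u_j$. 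I would apply this principle not to $X_s$ itself but to its \emph{increments}, since self-similarity with stationary increments in the strong sense is a statement about the law of increments under scaling and rigid motions.

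First I would fix the setup: $\{f_s\}_{s\in\R^d}$ represents an FSF $X_s$ precisely when $(X_s)$ is an $(\aa,H)$-FSF, i.e. it is \sas, $H$-self-similar, and has stationary increments in the strong sense (sis). I would write the increment $X_{s_{j+1}}-X_{s_j}$, which by linearity of the integral has spectral function $f_{s_{j+1}}-f_{s_j}$, and note that the increments over an $H$-self-similar sis field transform as follows: applying a rigid motion $\g$ and scaling by $c>0$ sends the increment vector to one whose law is that of $c^H$ times the original increment vector. Concretely, self-similarity gives $(X_{cs}) \ed (c^H X_s)$ and sis gives $(X_{\g(s)}-X_{\g(0)}) \ed (X_s - X_0)$; composing these, $(X_{c\g(s_{j+1})}-X_{c\g(s_j)})_j \ed (c^H(X_{s_{j+1}}-X_{s_j}))_j$ in finite-dimensional distribution.

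The heart of the argument is then to equate this distributional identity with the invariance of the displayed integral. Using the characteristic-function formula on both sides of $(X_{c\g(s_{j+1})}-X_{c\g(s_j)})_j \ed (c^H(X_{s_{j+1}}-X_{s_j}))_j$, the left side has exponent $-\int_E |\sum_j \th_j(f_{c\g(s_{j+1})}-f_{c\g(s_j)})|^\aa\,m(dx)$, while the right side, absorbing the $c^H$ into the coefficients, has exponent $-c^{H\aa}\int_E |\sum_j \th_j(f_{s_{j+1}}-f_{s_j})|^\aa\,m(dx)$. Equality of these exponents for all $\th_j$ is exactly the assertion that
\be
c^{-H\aa}\int_E \lb\sum_{j=1}^{n-1} \theta_j(f_{cg(s_{j+1})}-f_{cg(s_{j})})\rb^\aa m(dx)
\ee
is independent of $c$ and $\g$ (its value at $c=1$, $\g=\mathrm{id}$ being the reference). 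The forward direction (representation $\Rightarrow$ invariance) follows by reading these equalities left to right; the converse follows because the characteristic-function formula determines all finite-dimensional distributions of the increments, hence, together with $X_0=0$ (built into the FSF definition), the full law of the field.

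The main obstacle I anticipate is purely bookkeeping rather than conceptual: one must handle the composition of scaling and rigid motion carefully, since a rigid motion $\g$ need not fix the origin, so one works with \emph{increments} $f_{s_{j+1}}-f_{s_j}$ throughout to make the base point drop out, and must verify that the reindexing $s_j \mapsto c\g(s_j)$ in the increment differences is exactly what appears in the integrand. I would also note that the restriction to differences of consecutive times (the sum runs to $n-1$ with telescoping structure) is no loss, since arbitrary increments are linear combinations of consecutive ones; this is the standard device letting one capture the sis and self-similarity conditions simultaneously in a single integral expression, and it is precisely the generalization of Proposition~7.3.6 of \cite{samorodnitsky1994stable} from $\R$ to $\R^N$ with $\GG$ enlarged from translations to all Euclidean rigid motions.
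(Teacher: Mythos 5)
Your proposal is correct and follows essentially the same route as the paper: both translate the $(\aa,H)$-FSF property of the increments into the characteristic-function exponent via Eq.~3.2.2 of \cite{samorodnitsky1994stable} and read off the invariance of the displayed integral, the paper doing this as a single chain of equalities. Your added remarks on the converse direction and on consecutive increments spanning all increments are sound elaborations of the same argument rather than a different approach.
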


\begin{proof}
By the definition of representations in terms of stable integrals, we have for all $c>0$ and $g\in\GG$
\bea
&&\exp\(-c^{-H\aa}\int_E \lb\sum_{j=1}^{n-1}
\theta_j(f_{cg(s_{j+1})}-f_{cg(s_{j})})\rb^\aa m(dx)\)\nn\\
&=& \E
\exp\(i\sum_{j=1}^n \th_j c^{-H}(X_{cg(s_{j+1})}-X_{cg(s_j)})\)\nn\\
&=& \E
\exp\(i\sum_{j=1}^n \th_j (X_{s_{j+1}}-X_{s_j})\)\nn\\
&=&\exp\(- \int_E \lb\sum_{j=1}^{n-1} \theta_j(f_{s_{j+1}}-f_{s_{j}})
\rb^\aa m(dx)\). \eea
Note that the middle equality holds since $X_s$ is an $(\aa,H)$-FSF.
\end{proof}

\begin{thm}\label{thm:sssi}
Assume the usual conditions in \eqref{uc}. Then, the
$\aa$-stabilized random-time kernel $\{f_{\t_t}\}_{t\in \R^N}$ is a representation for
an $(\aa,\H)$-FSF with $$\H=H'H$$ and control measure $\P'\times m$.
\end{thm}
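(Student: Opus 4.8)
The plan is to reduce everything to the invariance criterion of Lemma \ref{lemma: ss}, whose proof applies verbatim with $(\R^d, H, \GG(\R^d))$ replaced by $(\R^N, \H, \GG(\R^N))$. Since $\{f_{\t_t}\}$ lies in $L^\aa(\Om'\times E,\P'\times m)$ by \eqref{welldefined}, it represents a well-defined \sas\ field, and to identify this field as an $(\aa,\H)$-FSF it suffices to fix $n\ge 1$, reals $\theta_1,\dots,\theta_n$ and times $t_1,\dots,t_n\in\R^N$, and to show that
\be
I(c,\g):=c^{-\H\aa}\int_{\Om'\times E}\lb\sum_{j=1}^{n-1}\theta_j\(f_{\t_{c\g(t_{j+1})}(\om')}-f_{\t_{c\g(t_j)}(\om')}\)\rb^\aa\,\P'(d\om')\,m(dx)
\ee
is independent of $c>0$ and of $\g\in\GG(\R^N)$. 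Integrating in $x$ first (Fubini being licensed by the finiteness in \eqref{welldefined}), I introduce the deterministic functional
\be
\Phi(s_1,\dots,s_n):=\int_E\lb\sum_{j=1}^{n-1}\theta_j\(f_{s_{j+1}}(x)-f_{s_j}(x)\)\rb^\aa m(dx),\qquad s_j\in\R^d,
\ee
so that $I(c,\g)=c^{-\H\aa}\,\E'\,\Phi\(\t_{c\g(t_1)},\dots,\t_{c\g(t_n)}\)$.

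The key input is that $\Phi$ inherits two invariances from $X$. Because $X_s$ is itself an $(\aa,H)$-FSF represented by $\{f_s\}$, Lemma \ref{lemma: ss} applied to $X$ over $\R^d$ gives, for every $a>0$ and $\g'\in\GG(\R^d)$,
\be
\Phi\(a\g'(s_1),\dots,a\g'(s_n)\)=a^{H\aa}\,\Phi(s_1,\dots,s_n).
\ee
Specializing $\g'=\mathrm{id}$ gives the scaling $\Phi(as_1,\dots,as_n)=a^{H\aa}\Phi(s_1,\dots,s_n)$, and specializing $a=1$ gives invariance of $\Phi$ under every rigid motion of $\R^d$, in particular under translations $s_j\mapsto s_j+v$, $v\in\R^d$.

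I now remove the two parameters in turn. For $c$, the $H'$-self-similarity of $\t$ yields $\(\t_{c\g(t_j)}\)_j\ed\(c^{H'}\t_{\g(t_j)}\)_j$ in $(\R^d)^n$, so applying the scaling of $\Phi$ with $a=c^{H'}$,
\be
\E'\,\Phi\(\t_{c\g(t_1)},\dots,\t_{c\g(t_n)}\)=\E'\,\Phi\(c^{H'}\t_{\g(t_1)},\dots,c^{H'}\t_{\g(t_n)}\)=c^{H'H\aa}\,\E'\,\Phi\(\t_{\g(t_1)},\dots,\t_{\g(t_n)}\).
\ee
As $\H=H'H$, the factor $c^{H'H\aa}$ cancels $c^{-\H\aa}$ and $I(c,\g)$ no longer depends on $c$. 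For $\g$, I first apply translation invariance of $\Phi$ with the $\om'$-dependent constant vector $v=-\t_{\g(0)}(\om')\in\R^d$, and then the strong stationarity of the increments of $\t$, that is $\(\t_{\g(t_j)}-\t_{\g(0)}\)_j\ed\(\t_{t_j}-\t_0\)_j$:
\be
\E'\,\Phi\(\t_{\g(t_1)},\dots,\t_{\g(t_n)}\)=\E'\,\Phi\(\t_{\g(t_1)}-\t_{\g(0)},\dots,\t_{\g(t_n)}-\t_{\g(0)}\)=\E'\,\Phi\(\t_{t_1}-\t_0,\dots,\t_{t_n}-\t_0\),
\ee
which is manifestly free of $\g$. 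Hence $I(c,\g)$ is constant, and the criterion yields that $\{f_{\t_t}\}$ represents an $(\aa,\H)$-FSF with $\H=H'H$ and control measure $\P'\times m$.

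The routine parts are the well-definedness of $X^\t$ as a \sas\ field and the Fubini interchange, both covered by the integrability in \eqref{welldefined} together with $\E'\|\t_t\|^{H\aa}<\ff$ (extended to the increment functional $\Phi$). The step I expect to be the main obstacle is the elimination of $\g$: since $\Phi$ is assembled from the kernel increments $f_{s_{j+1}}-f_{s_j}$, it genuinely depends on the absolute positions $\t_{\g(t_j)}\in\R^d$ rather than only on their differences, so the strong stationarity of $\t$ cannot be invoked at face value. The resolution is to spend the $\R^d$-translation invariance of $\Phi$ --- which is precisely the strong stationarity of the inner field $X$ over $\R^d$ --- to replace the values $\t_{\g(t_j)}$ by the increments $\t_{\g(t_j)}-\t_{\g(0)}$, whose joint law is $\g$-invariant by the strong stationarity of $\t$ over $\R^N$. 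Thus the proof must use the two layers of stationarity, that of $X$ in $\R^d$ and that of $\t$ in $\R^N$, simultaneously.
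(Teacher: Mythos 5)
Your proof is correct and follows essentially the same route as the paper: apply the invariance criterion of Lemma \ref{lemma: ss} over $\R^N$, eliminate $c$ via the $H'$-self-similarity of $\t$ together with the $H\aa$-homogeneity of the inner integral, and eliminate $\g$ by combining the translation invariance of the inner integral (strong stationarity of $X$ in $\R^d$) with the strong stationarity of the increments of $\t$ --- exactly the paper's ``random translation $h(\om',\g,\cdot)$'' step, which you spell out more explicitly. No substantive difference.
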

\begin{proof}
Let $c>0$ and $g\in\GG(\R^N)$.
Using Lemma \ref{lemma: ss} and the fact that $\t_t$ is $H'$-ss, we have
\bea\label{maincalc}
&&\exp\(-c^{-H'H\aa}\int_E \E'\lb\sum_{j=1}^{n-1}
\theta_j(f_{\t_{cg(t_{j+1})}}-f_{\t_{cg(t_{j})}})\rb^\aa m(dx)\)\nn\\
&=&\exp\(-c^{-H'H\aa}\int_E \E'\lb\sum_{j=1}^{n-1} \theta_j(f_{c^{H'}\t_{g(t_{j+1})}}-f_{c^{H'}\t_{g(t_{j})}}) \rb^\aa m(dx)\)\nn\\
&=&\exp\(- \E'\int_E \lb\sum_{j=1}^{n-1} \theta_j(f_{\t_{g(t_{j+1})}}-f_{\t_{g(t_{j})}}) \rb^\aa m(dx)\).
\eea

Since $\t_t$ is sis, there is a random translation $h(\omega',g,\cdot)\in\GG(\R^d)$ such that
$$(\t_{g(t_1)},\ldots,\t_{g(t_n)})\ed (h(\t_{t_1}),\ldots,h(\t_{t_n})),$$ thus
\be\nn\int_E \lb \sum_{j=1}^{n-1} \theta_j(f_{\t_{g(t_{j+1})}}-f_{\t_{g(t_{j})}})\rb^\aa m(dx) \ed \int_E \lb \sum_{j=1}^{n-1} \theta_j(f_{h(\t_{t_{j+1}})}-f_{h(\t_{t_{j}})})\rb^\aa m(dx).\ee
By the above equation and Lemma \ref{lemma: ss}, the right side of \eqref{maincalc} equals
\be\label{maincalc2}
\exp\(- \int_E \E'\lb\sum_{j=1}^{n-1} \theta_j(f_{\t_{t_{j+1}}}-f_{\t_{t_{j}}}) \rb^\aa m(dx)\).
\ee

Finally, \eqref{maincalc} and \eqref{maincalc2} show that
$$\exp\(-c^{-H'H\aa}\int_E \E'\lb\sum_{j=1}^{n-1}
\theta_j(f_{\t_{cg(t_{j+1})}}-f_{\t_{cg(t_{j})}})\rb^\aa m(dx)\)$$
does not depend on $c$ nor $g$, so using Lemma \ref{lemma: ss} once again, we have that
$\{f_{\t_t}\}_{t\in\R^N}$ represents an $(\aa,H'H)$-FSF. 
\end{proof}

\section{Examples}\label{sec:examples}

For the remainder of the paper, we let $\t_t^{H'}$ be a fractional Brownian field with parameter $H'$. 
This keeps us from getting bogged down in unilluminating details, while at the same time allows us to illustrate the properties and effects of $\aa$-stabilized random-time kernels.

\begin{ex}[$\aa$-stable Levy motion]\label{lt example}
Suppose $N=d=1$, $f_s=1_{[0,s]}$,
and $\t_t^{H'}$ is a fractional Brownian motion. If $M_\aa$ is an $\aa$-stable random measure
on $\Om'\times\R$ with control measure $\P'\times m$, then the random-time $\aa$-stable Levy motion
\bea\nn
\int_{\Om'\times\R} f_{\t_t^{H'}}(x) \; M_\aa(d\omega', dx)
&=& \int_{\Om'\times\R} 1_{[0,\t_t^{H'}(\omega')]}(x) \; M_\aa(d\omega', dx),\ t\ge 0,\label{ifsm}
\eea
is equivalent to what is known as an Indicator FSM with Hurst parameter $\H=H'/\aa$.  If $\t_t^{H'}>0$, we have the
following interpretation:
$[0,\t_t^{H'}(\omega')]:=[\t_t^{H'}(\omega'),0]$.
\end{ex}

\begin{ex}[Levy-Chentsov fields]\label{chentsov example}
Let us extend the above example by letting $N\ge 1, d\ge 1$, and
$f_s=1_{B(s/2)}$
where $B(s/2)$ is the ball in $\R^d$ centered at $s/2$ with radius $\|s/2\|$ (here $\|\cdot\|$ is the Euclidean norm).
Suppose also that points in $\R^d$ are written $(\phi,r)\in S_n\times\R_+$ where $S_n$ is the $(n-1)$-dimensional unit sphere.
We can identify points in $\R^d$ with hyperplanes of codimension $1$ which are distance $r$ from the origin and which are orthogonal to $\phi$.
The ball $B(s/2)$ can be thought of as the set of hyperplanes which pass between the origin and $s$.
 
In \cite{chentsov}, it was shown that $\{f_s\}_{s\in\R^d}$ is a representation of a $1/\aa$-FSF where $M_\aa$ has control measure
$m(C^{-1}d\phi,dr)$. Here $C^{-1}d\phi$ is a constant multiple of Lebesgue measure on $S_n$, scaled so that the ball
corresponding to the unit time $e_1$ has
measure one:
\bea
C&=&\frac{1}{2}\int_{S_n} |(\phi\cdot e_1)|\, d\phi\nn\\
&=& \int_{S_n} \text{Leb}\{r:0<r<(\phi\cdot e_1)\}\, d\phi.
\eea

Note that the increments $X_{s_2}-X_{s_1}$ and
$X_{s_4}-X_{s_3}$ are independent if and only if the line segments 
$[s_1,s_2]$ and $[s_3,s_4]$ lie on the same line and do not intersect.
Letting $\t_t^{H'}$ be an $(N,d)$-FBF, we have that any two nontrivial increments of a random-time Levy-Chentsov field, $X_{\t^{H'}_{t_2}}-X_{\t^{H'}_{t_1}}$ and
$X_{\t^{H'}_{t_4}}-X_{\t^{H'}_{t_3}}$, are dependent since $[\t^{H'}_{t_1},\t^{H'}_{t_2}]$ and $[\t^{H'}_{t_3},\t^{H'}_{t_4}]$ are colinear with zero probability when $d>1$
and intersect with positive probability when $d=1$.

Finally, one may check that when $N=d=1$, these fields are reduced to Example \ref{lt example}.

\end{ex}

\begin{ex}[Random-slope FSMs]\label{slope example}

Let $d=1$. The so-called random-slope FSM (see \cite{kono1991self}) is given by the integral representation
\be\nn
\int_{[0,1]} s \,M_\aa(dx),
\ee
where $M_\aa$ is a stable random measure on $[0,1]$ with Lebesgue control measure. The process is almost surely a
line where the slope is given by the random variable $S_\aa=\int_{[0,1]}\,M_\aa(dx)$. Since lines are $1$-sssi, these processes 
can be considered degenerate FSMs.

If we replace
$s$ with an FBF $\t^{H'}_t$ supported on $\Omega'$, then a
random-time random-slope FSM,
\be\label{ex:subGaussian} \int_{\Omega'} \t^{H'}_t \,M_\aa(d\omega')\ed \int_{\Omega'\times[0,1]} \t^{H'}_t \,M_\aa(d\omega',dx),
\ee
is a representation
of a so-called SubGaussian FSF \cite[Sections 3.7,7.9]{samorodnitsky1994stable}. By Proposition 3.8.2 there, there is a totally skewed
stable random variable $$A\sim S_{\aa/2}(\sigma=\left[\cos({\aa\pi}/4)\right]^{2/\aa},\beta=1,\mu=0)$$ such that
\eqref{ex:subGaussian} is equal in distribution to $A^{1/2}\t_t^{H'}$.
As pointed out in the remarks following Theorem 12.4.1 in \cite{samorodnitsky1994stable}, 
using the representation $A^{1/2}\t_t^{H'}$, these fields give us examples of $(\aa,1/\aa)$-FSFs
which have the remarkable feature of having continuous paths a.s.

If we generalize $\t^{H'}_t$ to be an $(\aa',H')$-FSF, then \eqref{ex:subGaussian} is called a  Substable FSF.
The fact that Substable FSFs are true FSFs follows from
 Theorem 7.9.1 in \cite{samorodnitsky1994stable}.  Theorem \ref{thm:sssi} above can therefore be considered a generalization of this result.
\end{ex}

\begin{ex}[Moving average representations of FBFs]\label{ex:lfsm}
Although we have focused on $0<\aa<2$, note that $\aa$-stabilized random-time kernels can also be used to construct
fractional Gaussian fields.
Let $M_2
(dx)$ be a Gaussian random measure on $\R^d$ with Lebesgue control measure and let
$$f_s=c_d(\|s-x\|^{(H-d)/2} - \|x\|^{(H-d)/2})$$
where $c_d$ is chosen so that $\|f_{e_1}\|_2=1$ and $e_1$ is some unit vector in $\R^d$.

Noting that Theorem \ref{thm:sssi} holds for $\aa=2$ and using the fact that FBFs are the only $H$-sssis Gaussian fields, we have that
$$\int_{\Om'\times\R^d} c_d(\|\t^{H'}_t-x\|^{(H-d)/2} - \|x\|^{(H-d)/2}) M_2(d\omega',dx)$$
is a representation of an FBF with Hurst parameter $\tilde H = H'H$. Since $0<H'<1$ we see that 
the new FBF (represented by the $\aa$-stabilized random-time kernel) has a strictly
smaller Hurst parameter then the original FBF, i.e. $\tilde H\in(0,H)$.
\end{ex}

\section{Random-time linear fractional stable motions}\label{sec:erg}

Suppose now that $d=1$. Recently, there have been some partial classifications of FSMs with $0<\aa<2$ using the invariance of certain ergodic-theoretic properties related to spectral representations of \sas\ processes and their associated flows \cite{rosinski1995structure,samorodnitsky2005null}.
The partial classifications use flows associated to the increment process $(X_{n+1}-X_n)_{n\in\Z}$ of an FSM $\(X_s\)_{s\in\R}$.
The increment process is stationary, thus its associated flows (and representations) can be classified as either dissipative, null-conservative, or positive-conservative.  We have used the plural `flows' since a \sas\ process does not have a unique flow;
however the property of being dissipative, null-conservative, or positive-conservative is invariant
across all flows associated to a given stationary \sas\ process.
\begin{defi}
A measurable family of functions $\{\phi_t\}_{t\in T}$ mapping $E$
onto itself and such that
\begin{enumerate}
\item
$\phi_{t+s}(x) =\phi_t(\phi_s(x))$ for all $t,s\in T$ and $x\in
E$,
\item $\phi_0(x)=x$ for all  $x\in E$
\item $m\circ \phi_t^{-1}\sim m$
for all $t\in T$
\end{enumerate}
is called a \emph{nonsingular flow}.
A measurable family $\{a_t\}_{t\in T}$ is called a \emph{cocycle} for the flow $\{\phi_t\}_{t\in T}$
if for
every $s,t\in T$ we have
\be
a_{t+s}(x) = a_s(x)a_t(\phi_s(x))\ m\text{-a.e.}.
\ee
\end{defi}

We briefly recount some results about dissipative, null-conservative, and positive-conservative representations.  For more details, we refer the reader
to the original works \cite{rosinski1995structure,samorodnitsky2005null} or to the brief review in Sections 3 and 4 of \cite{jung2010indicator}.
In \cite{rosinski1995structure}, it was
shown that measurable stationary
\sas\ processes always have spectral representation of the form
\begin{equation} \label{e:station.kernel}
f_n(x) = a_n(x) \,\left( \frac{dm\circ
\phi_n}{dm}(x)\right)^{1/\aa}  f_0\circ \phi_n(x)
\end{equation}
where $f_0\in L^\aa(E,m)$,  $\{\phi_n\}_{n\in \Z}$ is a nonsingular flow, and $\{a_n\}_{n\in \Z}$ is a cocycle, for $\{\phi_n\}_{n\in \Z}$, which takes values in $\{-1,1\}$. One may always assume the following full support condition:
\be\label{cond:support}
\text{supp}\{f_t:t\in T\}=E.
\ee
If $Y_n$ has a representation of the form \eqref{e:station.kernel}, we say that $Y_n$ is generated by $\phi_n$. Here $Y_n$ should be thought of
as an increment process $Y_n:= X_{n+1}-X_n$ of an FSM.

The usefulness of \eqref{e:station.kernel} is found in the realization that ergodic-theoretic
properties of a generating
flow $\phi_n$ can be related to the probabilistic properties of the \sas\ process
$Y_n$.  In particular,
certain
ergodic-theoretic properties of the flow are found to be invariant from representation
to representation.
In Theorem 4.1 of \cite{rosinski1995structure} it was shown that the dissipative-conservative decomposition of a flow is
one such representation-invariant property. 
The following result appeared as Corollary 4.2 in \cite{rosinski1995structure} and has been adapted to the current context:
\begin{thm}[Rosinski]\label{thm:rosinski}
Suppose $0<\aa<2$. A stationary \sas\ process is generated by a conservative (dissipative, respectively) flow if and only if for some (all) measurable spectral representation
$\{f_n\}_{n\in\Z}\subset L^{\aa}(E,m)$ satisfying \eqref{cond:support}, the sum
\be
\sum_{n\in \Z} |f_n(x)|^{\aa}
\ee
is infinite (finite) $m$-a.e. on $E$.
\end{thm}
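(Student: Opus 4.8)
The plan is to reduce the sum $\sum_{n\in\Z}|f_n(x)|^\aa$ to a quantity governed by the Hopf decomposition of a generating flow, and then to promote the resulting dichotomy from ``some'' to ``all'' by invoking the representation-invariance of that decomposition. First I would exploit the stationary form \eqref{e:station.kernel}. Since the cocycle $\{a_n\}$ takes values in $\{-1,1\}$, we have $|a_n(x)|^\aa=1$, and hence
\be\nn
\sum_{n\in\Z}|f_n(x)|^\aa=\sum_{n\in\Z}\frac{dm\circ\phi_n}{dm}(x)\,\big(|f_0|^\aa\circ\phi_n\big)(x).
\ee
Writing $g:=|f_0|^\aa$, note that $g\in L^1(E,m)$ because $f_0\in L^\aa(E,m)$, and that $g>0$ exactly on $\{f_0\neq 0\}$. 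Because $\{f_n\neq 0\}=\phi_n^{-1}\{f_0\neq 0\}$, the flow-saturation of $\{g>0\}$ equals $\bigcup_n\{f_n\neq 0\}$, which is all of $E$ by the full support condition \eqref{cond:support}.

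Next I would apply Hopf's decomposition theorem to the nonsingular flow $\{\phi_n\}_{n\in\Z}$. This splits $E$, modulo $m$-null sets, into a flow-invariant conservative part $C$ and dissipative part $D$, characterized by the fact that for every $g\in L^1_+(m)$ the series $\sum_n \frac{dm\circ\phi_n}{dm}\,g\circ\phi_n$ is a.e.\ either infinite or zero on $C$, and a.e.\ finite on $D$. Since the saturation of $\{g>0\}$ is $E$, the ``zero'' alternative is excluded off a null set, so $\sum_n|f_n|^\aa=\ff$ a.e.\ on $C$ and $\sum_n|f_n|^\aa<\ff$ a.e.\ on $D$. This already yields the ``for some'' direction in both cases: if the process is generated by a conservative flow then $C=E$ mod null, forcing the sum to be a.e.\ infinite; conversely, if the sum is a.e.\ infinite for one such representation, then $D$ is null, so the generating flow is conservative. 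The dissipative statement is entirely symmetric.

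Finally, to upgrade ``for some'' to ``for all'', I would invoke the invariance of the conservative/dissipative decomposition across spectral representations, i.e.\ Theorem 4.1 of \cite{rosinski1995structure}. Its content is that any two full-support (minimal) representations of the same \sas\ process are intertwined by a point map $\Phi$ together with an a.e.\ strictly positive multiplier, under which conservativity of the generating flow is preserved. Equivalently, passing from one such representation to another multiplies $\sum_n|f_n(x)|^\aa$ by an a.e.\ positive factor and effects a measurable change of variables, and neither operation can convert an a.e.\ infinite sum into an a.e.\ finite one. The hard part of the argument is exactly this rigidity input: it rests on the Hardin--Rosinski uniqueness theory for minimal \sas\ representations, which is what turns the Hopf dichotomy into a genuine invariant of the process rather than an artifact of the chosen kernel. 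Granting it, the dichotomy holds simultaneously for all representations satisfying \eqref{cond:support}, which completes the proof.
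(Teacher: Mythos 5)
The paper does not prove this statement; it imports it verbatim as Corollary 4.2 of \cite{rosinski1995structure}, so there is no internal proof to compare against. Your sketch is, however, a faithful reconstruction of Rosinski's actual argument: reduce $\sum_n|f_n|^\aa$ to $\sum_n \frac{dm\circ\phi_n}{dm}\,(|f_0|^\aa\circ\phi_n)$ via \eqref{e:station.kernel}, apply the Hopf conservative--dissipative decomposition together with the observation that the full support condition \eqref{cond:support} makes the flow-saturation of $\{f_0\neq 0\}$ all of $E$ (which rules out the ``zero'' alternative on the conservative part), and then transfer between representations using the Hardin--Rosinski rigidity (Theorem 4.1 and Remark 2.5 of \cite{rosinski1995structure}). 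You correctly identify that rigidity step as the genuinely hard input. The only organizational caveat: an arbitrary representation satisfying \eqref{cond:support} need not be in the flow form \eqref{e:station.kernel}, so the intertwining $f_n = h\cdot(f_n^*\circ\Phi)$ with $h\neq 0$ a.e.\ is needed \emph{before} the Hopf argument in the ``some $\Rightarrow$ conservative'' direction, not only at the end to upgrade ``some'' to ``all''; since $\sum_n|f_n|^\aa = |h|^\aa\,\bigl(\sum_n|f_n^*|^\aa\bigr)\circ\Phi$, this is harmless, but it is worth stating explicitly.
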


In \cite{samorodnitsky2005null}, another representation-invariant property of flows, the {\it positive-null} decomposition of stationary
\sas\ processes, was introduced. Dissipative flows are always null, whereas conservative flows can be either null or positive. Perhaps the best way
to think about null \sas\ processes is given in the following result.

\begin{thm}[Samorodnitsky]\label{thm:sam}
Suppose $0<\aa<2$. A stationary \sas\ process is generated by a null flow if and only if is is ergodic.
\end{thm}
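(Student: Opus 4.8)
The plan is to translate ergodicity of the stationary \sas\ process into an analytic condition on its spectral functions, and then to read off that condition from the positive-null decomposition of the generating flow. The key external input I would use is the ergodicity criterion of Cambanis-Hardin-Weron and Gross (together with the fact, special to stable processes, that ergodicity coincides with weak mixing): a stationary \sas\ process with a full-support representation $\{f_n\}_{n\in\Z}$ as in \eqref{e:station.kernel} and \eqref{cond:support} is ergodic if and only if
\be\label{eq:ergcrit}
\frac{1}{N}\sum_{n=1}^N \int_E \min\(|f_0(x)|^\aa,|f_n(x)|^\aa\)\, m(dx)\longrightarrow 0.
\ee
Thus everything reduces to deciding when the Ces\`aro averages of the overlap functional $c_n:=\int_E \min(|f_0|^\aa,|f_n|^\aa)\,m(dx)$ vanish.

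First I would rewrite $c_n$ purely in terms of the flow. Setting $h:=|f_0|^\aa\in L^1(E,m)$ and using $|a_n|\equiv 1$ in \eqref{e:station.kernel}, one gets $|f_n|^\aa = \frac{dm\circ\phi_n}{dm}\,h\circ\phi_n =: \widehat U^n h$, where $\widehat U$ is the transfer operator of the nonsingular flow $\{\phi_n\}$; it is positive and integral-preserving, so $\int_E \widehat U^n h\, m(dx)=\int_E h\, m(dx)$, which is just the stationarity of $\{X_n\}$. Hence $c_n=\int_E \min(h,\widehat U^n h)\, m(dx)$ depends only on $h\ge0$ and on the flow. I would then split $E$ by the Hopf decomposition $E=D\cup C$ into dissipative and conservative parts and split $C=P\cup N$ into its positive and null parts (\cite{samorodnitsky2005null}); all three sets are flow-invariant, so the splitting is respected by $\widehat U^n$ and $c_n=c_n^D+c_n^P+c_n^N$.

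Next I would estimate the three pieces. On the dissipative part $D$, Theorem \ref{thm:rosinski} gives $\sum_n \widehat U^n h<\infty$ $m$-a.e., so $\widehat U^n h\to 0$ a.e. and $c_n^D\to 0$ even without averaging. On the conservative null part $N$ I would invoke the Hajian-Kakutani characterization: $N$ is exhausted by weakly wandering sets, and the Hopf ratio ergodic theorem then forces $\frac{1}{N}\sum_{n\le N} c_n^N\to 0$; this is the genuine infinite-ergodic-theory input. The decisive piece is $P$: there the flow carries a finite invariant measure $\mu$ equivalent to $m|_P$, and changing the control measure to $\mu$ turns the representation into a measure-preserving one, so that $\int_P \min(|f_0|^\aa,|f_n|^\aa)\,m(dx)=\int_P \min(|g_0|^\aa,|g_0\circ T_n|^\aa)\,d\mu$ with $T_n$ $\mu$-preserving and $|g_0|^\aa\in L^1(\mu)$. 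A mean-ergodic-theorem estimate then bounds $\frac{1}{N}\sum_{n\le N}\int_P \min(|g_0|^\aa,|g_0\circ T_n|^\aa)\,d\mu$ below by a strictly positive constant whenever $m(P)>0$, since finite total mass cannot be pushed onto asymptotically disjoint supports in an averaged sense. Combining the three parts, \eqref{eq:ergcrit} holds if and only if $m(P)=0$, i.e. if and only if the flow is null, which by the criterion is equivalent to ergodicity.

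Two ingredients carry the weight. The first is the criterion \eqref{eq:ergcrit} itself, the bridge from the probabilistic definition of ergodicity to the analytic overlap functional, which I would take as a known result. The real obstacle is the dynamical dichotomy for the Ces\`aro averages of $\int_E \min(h,\widehat U^n h)\, m(dx)$: proving that the conservative null part drives them to zero while the positive part keeps them bounded away from zero, all the while carrying along the $\{-1,1\}$-cocycle $a_n$ and the Radon-Nikodym densities uniformly over weakly wandering sets. This is exactly where Hajian-Kakutani, the Hopf ratio ergodic theorem, and recurrence must be used carefully, and it is the step I expect to be most delicate.
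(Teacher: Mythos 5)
The paper states this result without proof, importing it from \cite{samorodnitsky2005null}, so there is no internal argument to compare against; your sketch follows essentially the same route as Samorodnitsky's original proof (reduce ergodicity, via its equivalence with weak mixing for stationary S$\alpha$S processes, to the vanishing of the Ces\`aro averages of the overlap functional $\int_E \min(|f_0|^\alpha,|f_n|^\alpha)\,m(dx)$, then push the Hopf and positive--null decompositions through that criterion). The one step you leave genuinely open --- the vanishing of the averaged overlaps on the conservative null part via weakly wandering sets --- is, as you yourself flag, the technical heart of the cited proof, while your lower bound on the positive part is correctly obtained from the finite invariant measure there together with a Khintchine-type recurrence estimate on a level set of $|f_0|^\alpha$.
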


We will shortly see that the increment process of a
random-time L-FSM is mixing which implies that their flows are either dissipative or conservative null.  In order to show this, we will need a result which appeared as Theorem 2.7 of \cite{gross1994some}:

\begin{lem}[A. Gross]\label{lem:gross}
Suppose $Y_n$ is a stationary \sas\ process, and assume $\{f_n\}\subset L^{\aa}(E,m)$ is a representation of $Y_n$.
Then, $Y_n$ is mixing if and only if for every compact $K\subset\R-\{0\}$ and
every $\eps>0$,
\be
\lim_{n\to \ff} m\{x:f_0\in K, |f_n|>\eps\}=0.
\ee
\end{lem}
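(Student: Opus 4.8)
The plan is to pass to characteristic functions, convert mixing into an integral condition on the representation, and then read off the measure condition. Since the exponentials $\exp(i\sum_j\theta_j Y_j)$ of finite linear combinations are total, $(Y_n)$ is mixing if and only if, for all finite families $(\theta_j)_{j=0}^p$, $(\psi_k)_{k=0}^q$,
\be
\E\exp\(i\sum_{j}\theta_j Y_j+i\sum_k\psi_k Y_{n+k}\)\to\E\exp\(i\sum_j\theta_j Y_j\)\,\E\exp\(i\sum_k\psi_k Y_{n+k}\).
\ee
Writing $g:=\sum_j\theta_j f_j$ and $h_n:=\sum_k\psi_k f_{n+k}$ and using the \sas\ characteristic function (Eq. 3.2.2 in \cite{samorodnitsky1994stable}), this is equivalent to the vanishing of the cross term
\be\label{pf:cross}
\Delta_n:=\int_E\(\lb g+h_n\rb^\aa-\lb g\rb^\aa-\lb h_n\rb^\aa\)\,m(dx)\to 0
\ee
for every such $g,h_n$. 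The whole proof reduces to matching \eqref{pf:cross} with the stated support condition, for which I would use two elementary inequalities valid for $0<\aa<2$: an upper bound $\big|\lb a+b\rb^\aa-\lb a\rb^\aa-\lb b\rb^\aa\big|\le C_\aa(\lb a\rb\,\lb b\rb)^{\aa/2}$, and a symmetrized lower bound $2\lb a\rb^\aa+2\lb b\rb^\aa-\lb a+b\rb^\aa-\lb a-b\rb^\aa\ge c_\aa(\lb a\rb\wedge\lb b\rb)^\aa$, both of which follow by homogeneity and continuity from their one-variable profiles.

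For the forward implication I would take $p=q=0$, so that mixing yields $\Delta_n\to0$ for the bivariate choices $(g,h_n)=(f_0,\pm f_n)$. Adding the two instances and invoking the symmetrized lower bound gives $\int_E(\lb f_0\rb\wedge\lb f_n\rb)^\aa\,m(dx)\to0$. On the set $\{f_0\in K,\lb f_n\rb>\eps\}$, where $K\subset\R\setminus\{0\}$ is compact, both $\lb f_0\rb$ and $\lb f_n\rb$ are bounded below by some $\dd>0$, so the integrand there exceeds $(\dd\wedge\eps)^\aa$; hence $m\{f_0\in K,\lb f_n\rb>\eps\}\to0$, which is the support condition.

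For the reverse implication I would first upgrade the support condition to the statement that $\int_E(\lb f_0\rb\,\lb f_m\rb)^{\aa/2}\,m(dx)\to0$ as $m\to\ff$; this is a routine region split (over $\lb f_0\rb$ small, $\lb f_0\rb$ large, and $f_0$ in a compact annulus with $\lb f_m\rb$ large or small), using $f_0\in L^\aa(m)$, the stationarity identity $\|f_m\|_\aa=\|f_0\|_\aa$, the support condition on the annular piece, and Cauchy--Schwarz. Next, using the flow representation \eqref{e:station.kernel} together with the cocycle and chain-rule identities $\phi_{m+j}=\phi_m\circ\phi_j$ and $b_{m+j}=(b_m\circ\phi_j)\,b_j$, where $b_i:=(dm\circ\phi_i/dm)^{1/\aa}$, and the change of variables $y=\phi_j(x)$, the degree-$\aa$ homogeneity of $(u,v)\mapsto(\lb u\rb\lb v\rb)^{\aa/2}$ yields
\be
\int_E(\lb f_j\rb\,\lb f_{n+k}\rb)^{\aa/2}\,m(dx)=\int_E(\lb f_0\rb\,\lb f_{n+k-j}\rb)^{\aa/2}\,m(dx)\to0.
\ee
Since $t\mapsto t^{\aa/2}$ is subadditive for $\aa<2$, this bounds $\int_E(\lb g\rb\,\lb h_n\rb)^{\aa/2}\,m(dx)$ by a finite sum of such terms, so it too tends to $0$; the upper bound in the first paragraph then forces $\lb\Delta_n\rb\to0$, i.e.\ mixing.

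The main obstacle is the reverse direction, namely reducing the multivariate cross term to the pairwise support hypothesis. This is where the flow representation \eqref{e:station.kernel} is essential: the stated hypothesis controls only the pair $(f_0,f_n)$, whereas mixing demands control of all pairs $(f_j,f_{n+k})$, and it is precisely the degree-$\aa$ homogeneity of the product kernel that makes the cocycle change of variables collapse $(f_j,f_{n+k})$ back to $(f_0,f_{n+k-j})$. A secondary technical point is verifying the two elementary inequalities uniformly over $0<\aa<2$, in particular that the symmetrized profile is bounded below by $c_\aa(\lb a\rb\wedge\lb b\rb)^\aa$, its only zeros lying on the axes.
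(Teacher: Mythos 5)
The first thing to note is that the paper does not prove this lemma: it is imported verbatim as Theorem 2.7 of \cite{gross1994some}, so there is no in-paper argument to compare against, and your proposal is supplying a proof where the author supplies only a citation. On its own terms your route is the standard one (essentially Gross's). The reduction of mixing to the vanishing of the cross term $\Delta_n$ via characteristic functions is correct, both elementary inequalities do hold for $0<\aa<2$ (the symmetrized profile $2+2\lb t\rb^{\aa}-\lb 1+t\rb^{\aa}-\lb 1-t\rb^{\aa}$ is strictly positive for $t\neq 0$ and behaves like $2\lb t\rb^{\aa}$ near $0$, so the lower bound follows by homogeneity and compactness), and the forward direction, together with the region-splitting equivalence between the stated measure condition and $\int_E(\lb f_0\rb\,\lb f_m\rb)^{\aa/2}\,dm\to 0$, is complete.

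The one genuine soft spot is the step you yourself single out as essential: in the reverse direction you invoke the flow form \eqref{e:station.kernel} to convert $\int_E(\lb f_j\rb\,\lb f_{n+k}\rb)^{\aa/2}\,dm$ into $\int_E(\lb f_0\rb\,\lb f_{n+k-j}\rb)^{\aa/2}\,dm$. The hypothesis of the lemma hands you an \emph{arbitrary} representation $\{f_n\}$, and Rosi\'{n}ski's theorem guarantees the cocycle form only for minimal representations, so you are not entitled to assume the given kernel carries a flow. The repair is both necessary and simpler than the cocycle computation: for any representation, the image of $m$ under $x\mapsto (f_s(x),f_t(x))$, read in polar coordinates, recovers the unique spectral measure $\sigma_{s,t}$ of the \sas\ vector $(Y_s,Y_t)$ on the unit circle, whence $\int_E\Phi(f_s,f_t)\,dm=\int_{S^1}\Phi\,d\sigma_{s,t}$ for every even nonnegative $\Phi$ that is positively homogeneous of degree $\aa$ and vanishes at the origin --- in particular for $\Phi(u,v)=(\lb u\rb\,\lb v\rb)^{\aa/2}$. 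Stationarity gives $\sigma_{j,n+k}=\sigma_{0,n+k-j}$, which yields the identity for any representation and makes \eqref{e:station.kernel} unnecessary. With that substitution your argument is a correct and complete proof of the cited result.
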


Let us continue to assume in this section, that $\t_t^{H'}$ is a fractional Brownian motion with parameter $H'$. 
Let $d=N=1$ in Example \ref{ex:lfsm} and modify the kernels to
 \bea
f_{\aa,H}(a,b;s,x):= a\((s-x)_+^{H-1/\aa} - (-x)_+^{H-1/\aa}\) +
 b\((s-x)_-^{H-1/\aa} - (-x)_-^{H-1/\aa}\)\nn
 \eea
so that in particular
\bea \label{rtlfsm kernel}
&&f_{\aa,H}^{\t^{H'}}(a,b;t,x;\omega'):=\\&&\nn
a\((\t_t^{H'}(\omega')-x)_+^{H-1/\aa} - (-x)_+^{H-1/\aa}\) +
b\((\t_t^{H'}(\omega')-x)_-^{H-1/\aa} - (-x)_-^{H-1/\aa}\). \eea  
Additionally, we normalize the scale parameters at time $s=1$ by choosing $a,b\ge 0$ so that
\be\label{normalized}
\|f_{\aa,H}(a,b;1,x)\|_\aa=1.
\ee

We say that an FSM is dissipative, null-conservative, or positive conservative if its increment process is dissipative, null-conservative, or positive conservative.


\begin{prop}\label{prop:lfsm}
Random-time L-FSMs given by the integral representations \be\label{OneOverAlpha}
\int_{\Om'\times\R} f_{\aa,H}^{\t^{H'}}(a,b;t,x;\omega')\,M_\aa(d\omega', dx),\ t\ge
0\ee 
are null-conservative. 
\end{prop}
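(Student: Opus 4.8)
The plan is to treat the increment process $Y_n := X^\t_{n+1}-X^\t_n$, which is a stationary \sas\ process by Theorem \ref{thm:sssi}, and which on $\Om'\times\R$ with control measure $\P'\times\mathrm{Leb}$ is represented by
\be
g_n(x,\omega'):=f_{\aa,H}^{\t^{H'}}(a,b;n+1,x;\omega')-f_{\aa,H}^{\t^{H'}}(a,b;n,x;\omega'),
\ee
where the $(-x)_\pm$ terms of \eqref{rtlfsm kernel} cancel, so that $g_n$ depends on $\omega'$ only through $u_n:=\t^{H'}_n(\omega')-x$ and $v_n:=\t^{H'}_{n+1}(\omega')-x$. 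To prove that $Y_n$ is null-conservative I must establish two separate things: that the generating flow is conservative, via Theorem \ref{thm:rosinski} (Rosinski), and that it is null. For the latter I will show $Y_n$ is mixing using Lemma \ref{lem:gross} (Gross); since mixing implies ergodicity, Theorem \ref{thm:sam} (Samorodnitsky) then delivers nullity. Note that dissipative flows are always null, so conservativity and nullity are genuinely independent claims and both must be checked.

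For conservativity the engine is the recurrence of one-dimensional fractional Brownian motion: for $\P'$-a.e.\ $\omega'$ one has $\limsup_n \t^{H'}_n=+\ff$ and $\liminf_n \t^{H'}_n=-\ff$, so the sequence $(\t^{H'}_n)_n$ crosses every level $x$ infinitely often. Fixing such an $\omega'$ and a generic $x$, I restrict attention to the upcrossings, i.e.\ the indices $n$ with $u_n\le 0<v_n$, at which
\be
g_n(x,\omega')=a\,v_n^{\,H-1/\aa}-b\,|u_n|^{\,H-1/\aa}.
\ee
When $H-1/\aa<0$ these terms blow up as $\t^{H'}_n\to x$, while when $H-1/\aa>0$ they are of order one whenever the crossing jump is of order one; in either regime the joint law of $(u_n,v_n)$ along crossings is non-degenerate, so exact cancellation $a\,v_n^{\,H-1/\aa}=b\,|u_n|^{\,H-1/\aa}$ can hold for at most a negligible set of crossings. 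Hence for a.e.\ $(x,\omega')$ there are infinitely many $n$ with $|g_n(x,\omega')|\ge c(x,\omega')>0$, so $\sum_{n\in\Z}|g_n(x,\omega')|^\aa=\ff$. Since each $g_n(\cdot,\omega')$ is nonzero Lebesgue-a.e., the full-support condition \eqref{cond:support} holds, and Theorem \ref{thm:rosinski} yields that $Y_n$ is conservative.

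For nullity I will verify mixing through Lemma \ref{lem:gross}: it suffices to show, for every compact $K\subset\R\setminus\{0\}$ and every $\eps>0$, that $(\P'\times\mathrm{Leb})\{(x,\omega'):g_0\in K,\ |g_n|>\eps\}\to 0$. Since $\t^{H'}_0=0$, the slice $A_0(\omega'):=\{x:g_0(x,\omega')\in K\}$ is bounded (the kernel tends to $0$ as $|x|\to\ff$) and, by Chebyshev together with $g_0\in L^\aa(\P'\times\mathrm{Leb})$ from \eqref{lpcondition} and \eqref{welldefined}, satisfies $\E'\,\mathrm{Leb}(A_0)<\ff$. The slice $A_n(\omega'):=\{x:|g_n(x,\omega')|>\eps\}$ concentrates near the interval $[\t^{H'}_n,\t^{H'}_{n+1}]$, so $A_0(\omega')\cap A_n(\omega')=\emptyset$ unless $\t^{H'}_n$ lies within a bounded distance of the origin. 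Splitting the $\omega'$-integral over $\{|\t^{H'}_1|\le R\}$ and its complement, the tail contribution is small uniformly in $n$ by integrability of $\mathrm{Leb}(A_0)$, while on $\{|\t^{H'}_1|\le R\}$ the overlap forces $|\t^{H'}_n|\le C(R)$, an event of probability tending to $0$ as $n\to\ff$ because $\t^{H'}_n\ed n^{H'}\t^{H'}_1$ by self-similarity. Letting $n\to\ff$ and then $R\to\ff$ gives the claim, so $Y_n$ is mixing, hence ergodic, hence null by Theorem \ref{thm:sam}.

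I expect the crux to be the conservativity estimate: converting ``infinitely many crossings'' into ``infinitely many increments $g_n$ bounded below'' requires ruling out systematic cancellation between the $\t^{H'}_n$- and $\t^{H'}_{n+1}$-contributions to $g_n$, uniformly across both sign regimes of $H-1/\aa$ (with the degenerate case $H=1/\aa$ excluded). The mixing estimate, by contrast, reduces to the routine fact that a fractional Brownian motion is unlikely to sit near the origin at large times.
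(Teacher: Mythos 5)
Your architecture is the same as the paper's: conservativity from the recurrence of fractional Brownian motion together with Rosinski's criterion (Theorem \ref{thm:rosinski}), and nullity from mixing via Gross's lemma (Lemma \ref{lem:gross}) and Theorem \ref{thm:sam}, using the same tail-splitting on $\{|\tH_1|\le M\}$ and the fact that $\tH_n\ed n^{H'}\tH_1$ makes it unlikely that $\tH_n$ stays near the origin. The mixing half of your argument is fine and matches the paper's estimate step for step.

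The one place you diverge is the conservativity lower bound, and there your argument has a genuine gap --- one you yourself flag as the crux and then do not close. Keeping both $a,b>0$, you argue that at an upcrossing the increment $a\,v_n^{H-1/\aa}-b\,|u_n|^{H-1/\aa}$ vanishes exactly only on a negligible set of crossings. But ruling out exact cancellation only yields $|g_n(x,\omega')|>0$ infinitely often; it does not produce a single constant $c(x,\omega')>0$ with $|g_n(x,\omega')|\ge c(x,\omega')$ along infinitely many $n$, and without such a uniform lower bound along a subsequence you cannot conclude $\sum_n|g_n(x,\omega')|^\aa=\ff$. The paper avoids the issue by taking $b=0$, $a>0$ without loss of generality: at a crossing with $\tH_n<x$ and $\tH_{n+1}>x+\eps$ the minus-part term in \eqref{rtlfsm kernel} vanishes identically, so the increment equals $a(\tH_{n+1}-x)^{H-1/\aa}$ exactly and is bounded below in terms of $\eps$ alone (in the regime $H<1/\aa$ one instead uses crossings with $\tH_{n+1}-x$ bounded above, which recurrence also supplies infinitely often). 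If you insist on treating general $a,b>0$ directly, you must select a quantitatively separated subsequence of crossings --- say $\tH_{n+1}-x\in[\eps,2\eps]$ while $x-\tH_n\ge K\eps$ for $K$ large, or the mirror condition depending on the sign of $H-1/\aa$ --- so that one of the two competing terms dominates the other by a fixed margin; FBM recurrence does provide infinitely many such crossings, but the ``non-degenerate joint law'' remark does not substitute for this selection.
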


\begin{proof}
Fix $x\in\R$, $\eps>0$.  Also, assume without loss of generality that $b=0,a>0$. Let $X^\t_t$ be the Random-time L-FSM
represented by $f_{\aa,H}^{\t^{H'}}(a,b;t,x;\omega')$.

Since $\t^{H'}_t$ is an FBM, we have that a.s.
$$\t^{H'}_n<x\text{ and }\t^{H'}_{n+1}>x+\eps$$
infinitely often. Thus for almost every $(x,\omega')$ we have
\be\lb f_{\aa,H}^{\t^{H'}}(a,b;n+1,x;\omega')-f_{\aa,H}^{\t^{H'}}(a,b;n,x;\omega')\rb\ge a\eps^{H-1/\aa}\ee
infinitely often, so by Theorem \ref{thm:rosinski} $X^\t_t$ is conservative.

We next show that the increments of $X^\t_t$ are mixing which, by Theorem \ref{thm:sam}, will imply that
$X^\t_t$ is null-conservative.
Fix a compact $K\subset\R-\{0\}$ and let $\delta=\dist(K,0)$.
Let $$B_\delta^n(\omega')=\lk x:\lb a\((\tH_{n}-x)_+^{H-1/\aa}-(-x)_+^{H-1/\aa}\)\rb>\delta\rk.$$
By Lemma \ref{lem:gross}, it suffices to show that
\be
\lim_{n\to\ff}(\P'\times\text{Leb.})\{(\omega',x):x\in B^1_\delta\cap B^n_\delta\}=0.
\ee

Choose $M$ large enough so that
\be\label{eqn:tail}
\int_M^\ff\P'(\tH_1>x)\,dx<1/M^3,
\ee
and choose $C$ so that if $|\tH_1|\le L$ then $B_\delta^1\subset (-CL^2,L]$ for all $L>0$.

We have that
\bea \nn
&&(\P'\times\text{Leb.})\{(\omega',x):x\in B^1_\delta\cap B^n_\delta\}\\
&=&(\P'\times\text{Leb.})\{(\omega',x):|\tH_1|>M, x\in B^1_\delta\cap B^n_\delta\}\nn\\
&&+(\P'\times\text{Leb.})\{(\omega',x):|\tH_1|\le M, x\in B^1_\delta\cap B^n_\delta\}\nn\\
&\le& 2\int_{M}^{\infty} \P'(\t^{H'}_1>x)\, dx + (\P'\times\text{Leb.})\{(\omega',x):|\tH_1|\le M, x\in B^1_\delta\cap B^n_\delta\} \nn\\
&\le& 2M^{-3} + (CM^2+M)\sup_{x\in(-CM^2,M]}\P'\{\omega':x\in B^n_\delta\}
\eea
Since the right side above can be made arbitrarily small by choosing $M$ and then $n$ appropriately, the result is proved.
\end{proof}

\section{Open questions}
\begin{enumerate}
\item
This work has generalized Indicator FSMs which are, in a sense, dual
to Local-time FSMs.  One can ask whether or not the
generalization presented in this work extends somehow to Local-time FSMs. We outline a possible extension in the special
case of L-FSMs.

In \cite[Section 6]{geman1980occupation}, the local time $\ell_\pi$
of an $(N,d)$-field $(\t_t)_{t\in\R^N}$ with respect to a measure
$\pi$, different from Lebesgue, is described. If it exists, then
 $$
 \pi(\{t\in A:\t_t\in B\}) = \int_B \ell_\pi(x,A)\,dx\quad
 A\in\BB(\R^N),\, B\in\BB(\R^d).
 $$

Suppose $\pi$ is absolutely continuous with respect to Lebesgue
measure $\lambda$ on $\R^N$. If $\t_t$ is a locally nondeterministic
$(N,d,\aa)$-field, then $\ell_\pi$ exists and is jointly continuous
in space and time (see \cite{xiao2011properties} for a review of local nondeterminism in the $\aa$-stable setting).

Fix $-\frac{1}{\aa}<H<0$ and consider the family of Radon-Nikodym
derivatives
$$\(\frac{d\pi_s}{d\lambda}(t)\)_{s\in\R^N}=\(\|s-t\|^{H} - \|-t\|^{H}\)_{s\in\R^N}.$$ 
By the occupation time formula $\ell_{\lambda}(x,\{t:t\in\R^N\})$ is clearly not $L^1$ even for a transient $\t_t$.
However, the measures $\{\pi_s\}$ are finite, thus if one can show
that $\ell_{\pi_s}(x,\{t:t\in\R^N\})$ is in $L^\aa(\Omega\times\R^N)$ for each
$s$, then following the procedures of \cite{CS},
$\(\ell_{\pi_s}(x,\R^N)\)_{s\in\R^N}$ is a representation of an
isotropic FSF.

If one is familiar with the theory of Gaussian processes, then one
might notice that the above scheme is similar to generalizing
Gaussian random measures to Isonormal Gaussian processes.  Such a
generalization is not simply a theoretical construct, but can be
practical. For example, if one is concerned about how much time a
stock-market-related process spends at a given point $x$ but daytime hours are more
important than nighttime hours, then a weighted measure $\pi(dt)$
allows one to express this within the local time.

\item In Section \ref{sec:erg} we found a family of null-conservative FSMs
for each $(\aa,H)$ with $0<H<\max(1,1/\aa)$.  In
\cite{CS,jung2010indicator}, null-conservative FSMs  were also found
for each $(\aa,H)$ with $0<H<\max(1,1/\aa)$.  A natural question one
may ask is, for a given pair $(\aa,H)$, are the processes in
Section \ref{sec:erg} different (in the sense of finite dimensional
distributions up to constant multiples) from those of \cite{CS} or
those of \cite{jung2010indicator}? Moreover, for a given pair $(a,b)$ satisfying \eqref{normalized},
are the Random-time L-FSMs different?  Such a result would be analogous to Theorem 7.4.5 in \cite{samorodnitsky1994stable}.
\end{enumerate}


\end{document}